\renewcommand\section{\@startsection{section}{1}{\z@}%
                                  {-2.0ex \@plus -1ex \@minus -.2ex}%
                                  {2.0ex \@plus.2ex}%
                                  {\normalfont\normalsize\bfseries}}
\newcommand{\ang}[1]{\langle#1\rangle}
\newcommand{\nat}{{\sf N}}
\newcommand{\xvec}[1]{\ifcase 3{#1} {\ang {x_1,x_2,x_3} } \else 
\ifcase 4{#1} {\ang{x_1,x_2,x_3,x_4}} \else {\ang {x_1,\ldots,x_{#1}}}\fi\fi}
\newcommand{\yvec}[1]{\ifcase 3{#1} {\ang {y_1,y_2,y_3} } \else 
\ifcase 4{#1} {\ang{y_1,y_2,y_3,y_4}} \else {\ang {y_1,\ldots,y_{#1}}}\fi\fi}
\newcommand{\zvec}[1]{\ifcase 3{#1} {\ang {z_1,z_2,z_3} } \else 
\ifcase 4{#1} {\ang{z_1,z_2,z_3,z_4}} \else {\ang {z_1,\ldots,z_{#1}}}\fi\fi}
\newcommand{\vecc}[2]{\ifcase 3{#2} {\ang { {#1}_1,{#1}_2,{#1}_3 } } \else
\ifcase 4{#1} {\ang { {#1}_1,{#1}_2,{#1}_3,{#1}_{4} } }
\else {\ang { {#1}_1,\ldots,{#1}_{#2}}}\fi\fi}
\newcommand{\veccd}[3]{\ifcase 3{#2} {\ang { {#1}_{{#3}1},{#1}_{{#3}2},{#1}_{{#3}3} } } \else
\ifcase 4{#1} {\ang { {#1}_{{#3}1},{#1}_{{#3}2},{#1}_{#3}3},{#1}_{{#3}4} }
\else {\ang { {#1}_{{#3}1},\ldots,{#1}_{{#3}{#2}}}}\fi\fi}
\newcommand{\veccz}[2]{\ifcase 3{#2} {\ang { {#1}_0,{#1}_2,{#1}_3 } } \else
\ifcase 4{#1} {\ang { {#1}_0,{#1}_2,{#1}_3,{#1}_{4} } }
\else {\ang { {#1}_0,\ldots,{#1}_{#2}}}\fi\fi}
\newcommand{\xve}[1]{\ifcase 3{#1} {x_1,x_2,x_3} \else 
\ifcase 4{#1} {x_1,x_2,x_3,x_4} \else {x_1,\ldots,x_{#1}}\fi\fi}
\newcommand{\yve}[1]{\ifcase 3{#1} {y_1,y_2,y_3} \else 
\ifcase 4{#1} {y_1,y_2,y_3,y_4} \else {y_1,\ldots,y_{#1}}\fi\fi}
\newcommand{\zve}[1]{\ifcase 3{#1} {z_1,z_2,z_3} \else 
\ifcase 4{#1} {z_1,z_2,z_3,z_4} \else {z_1,\ldots,z_{#1}}\fi\fi}
\newcommand{\ve}[2]{\ifcase 3#2 {{#1}_1,{#1}_2,{#1}_3} \else
\ifcase 4#2 {{#1}_1,{#1}_2,{#1}_3,{#1}_{4}}
\else {{#1}_1,\ldots,{#1}_{#2}}\fi\fi}
\newcommand{\ved}[3]{\ifcase 3#2 {{#1}_{{#3}1},{#1}_{{#3}2},{#1}_{{#3}3}} \else
\ifcase 4#2 {{#1}_{{#3}1},{#1}_{{#3}2},{#1}_{{#3}3},{#1}_{{#3}4}}
\else {{#1}_{{#3}1},\ldots,{#1}_{{#3}{#2}}}\fi\fi}
\newcommand{\fuve}[3]{
\ifcase 3#2
{{#3}({#1}_1),{#3}({#1}_2,{#3}({#1}_3)} \else
\ifcase 4#2
{{#3}({#1}_1),{#3}({#1}_2),{#3}({#1}_3),{#3}({#1}_4)}
\else
{{#3}({#1}_1),\ldots,{#3}({#1}_{#2})}\fi\fi}
\newcommand{\setmathchar}[1]{\ifmmode#1\else$#1$\fi}
\newcommand{\vlist}[2]{%
	\setmathchar{%
		\compound#2\one{#2}\two
		\ifcompound
			({#1}_1,\ldots,{#1}_{#2})
		\else
			\ifcat N#2
				({#1}_1,\ldots,{#1}_{#2})
			\else
				\ifcase#2
					({#1}_0)\or
					({#1}_1)\or
					({#1}_1,{#1}_2)\or 
					({#1}_1,{#1}_2,{#1}_3)\or
					({#1}_1,{#1}_2,{#1}_3,{#1}_4)\else 
					({#1}_1,\ldots,{#1}_{#2})
				\fi
			\fi
		\fi}}
\newif\ifcompound
\def\compound#1\one#2\two{%
	\def\one{#1}
	\def\two{#2}
	\if\one\two
		\compoundfalse
	\else
		\compoundtrue
	\fi}
\newcommand{\xwe}[1]{\ifcase 3{#1} {x_1\wedge x_2\wedge x_3} \else 
\ifcase 4{#1} {x_1\wedge x_2\wedge x_3\wedge x_4} \else {x_1\wedge \cdots \wedge
x_{#1}}\fi\fi}
\newcommand{\we}[2]{\ifcase 3#2 {\ang { {#1}_1\wedge {#1}_2\wedge {#1}_3 } } \else
\ifcase 4{#1} {\ang { {#1}_1\wedge {#1}_2\wedge {#1}_3\wedge {#1}_{4} } }
\else {\ang { {#1}_1\wedge \cdots\wedge {#1}_{#2}}}\fi\fi}
\newcommand{\st}{\mathrel{:}}
\newcommand{\floor}[1]{\left\lfloor{#1}\right\rfloor}
\newcommand{\s}[1]{\s_{#1}}
\newcommand{\monus}{\;\raise.5ex\hbox{{${\buildrel
    \ldotp\over{\hbox to 6pt{\hrulefill}}}$}}\;}
\newcommand{\infinity}{\infty}
\newcounter{savenumi}
\newtheorem{theoremfoo}{Theorem}[section] 
\newenvironment{theorem}{\pagebreak[1]\begin{theoremfoo}}{\end{theoremfoo}}
\newtheorem{lemmafoo}[theoremfoo]{Lemma}
\newenvironment{lemma}{\pagebreak[1]\begin{lemmafoo}}{\end{lemmafoo}}
\newtheorem{conjecturefoo}[theoremfoo]{Conjecture}
\newtheorem{conventionfoo}[theoremfoo]{Convention}
\newenvironment{convention}{\pagebreak[1]\begin{conventionfoo}\rm}{\end{conventionfoo}}
\newtheorem{porismfoo}[theoremfoo]{Porism}
\newtheorem{gamefoo}[theoremfoo]{Game}
\newtheorem{corollaryfoo}[theoremfoo]{Corollary}
\newenvironment{corollary}{\pagebreak[1]\begin{corollaryfoo}}{\end{corollaryfoo}}
\newtheorem{openfoo}[theoremfoo]{Open Problem}
\newtheorem{exercisefoo}{Exercise}
\newcommand{\fig}[1] 
{
 \begin{figure}
 \begin{center}
 \input{#1}
 \end{center}
 \end{figure}
}
\newtheorem{potanafoo}[theoremfoo]{Potential Analogue}
\newtheorem{notefoo}[theoremfoo]{Note}
\newenvironment{note}{\pagebreak[1]\begin{notefoo}\rm}{\end{notefoo}}
\newtheorem{notabenefoo}[theoremfoo]{Nota Bene}
\newtheorem{nttn}[theoremfoo]{Notation}
\newenvironment{notation}{\pagebreak[1]\begin{nttn}\rm}{\end{nttn}}
\newtheorem{empttn}[theoremfoo]{Empirical Note}
\newtheorem{examfoo}[theoremfoo]{Example}
\newtheorem{dfntn}[theoremfoo]{Def}
\newenvironment{definition}{\pagebreak[1]\begin{dfntn}\rm}{\end{dfntn}}
\newtheorem{propositionfoo}[theoremfoo]{Proposition}
\newenvironment{proof}
    {\pagebreak[1]{\narrower\noindent {\bf Proof:\quad\nopagebreak}}}{\QED}
\newcommand{\yyskip}{\penalty-50\vskip 5pt plus 3pt minus 2pt}
\newcommand{\blackslug}{\hbox{\hskip 1pt
        \vrule width 4pt height 8pt depth 1.5pt\hskip 1pt}}
\newcommand{\QED}{{\penalty10000\parindent 0pt\penalty10000
        \hskip 8 pt\nolinebreak\blackslug\hfill\lower 8.5pt\null}
        \par\yyskip\pagebreak[1]}
\newcommand{\BBB}{{\penalty10000\parindent 0pt\penalty10000
        \hskip 8 pt\nolinebreak\hbox{\ }\hfill\lower 8.5pt\null}
        \par\yyskip\pagebreak[1]}
\newtheorem{factfoo}[theoremfoo]{Fact}
\newenvironment{block}{\begin{list}{\hbox{}}{\leftmargin 1em
    \itemindent -1em \topsep 0pt \itemsep 0pt \partopsep 0pt}}{\end{list}}
\begin{document}

\newcommand{\KN}{K_{\nat}}
\newcommand{\NRE}{\hbox{NUM-RED-EDGES\ }}
\newcommand{\NBE}{\hbox{NUM-BLUE-EDGES\ }}
\newcommand{\RED}{\hbox{RED\ }}
\newcommand{\BLUE}{\hbox{BLUE\ }}
\newcommand{\REDns}{\hbox{RED}}
\newcommand{\BLUEns}{\hbox{BLUE}}

\title{How many ways can you make change: Some Easy Proofs}

\author{
\and
{William Gasarch}
\thanks{University of Maryland,
College Park, MD\ \ 20742,
\texttt{gasarch@cs.umd.edu}
}
\\ {\small Univ. of MD at College Park}
}

\date{}

\maketitle

\begin{abstract}
Given a dollar, how many ways are there to make change
using pennies, nickels, dimes, and quarters?
What if you are given a different amount of money?
What if you use different coin denominations?
This is a well known problem and formulas are known.
We present simpler proofs in several cases.
We use recurrences to derive formulas if the coin denominations
are $\{1,x,kx,rx\}$, and we use a simple proof using generating functions
to derive a formula for any coin set. 
\end{abstract}

\section{Introduction}

How many ways are there to make change of a dollar using 
pennies, nickels, dimes, and quarters?
This is a well known question in recreational math, serious math, and computer science.
We have observed three types answers in the literature and on the web.

\begin{enumerate}
\item
There are 242 ways to make change.
The author then points to a computer program or to the
actual list of ways to do it.
\item
The number of ways to make change for $n$ cents is the 
coefficient of $z^n$ in the power series for
$$\frac{1}{(1-z)(1-z^5)(1-x^{10})(1-z^{25})}$$
which can be worked out.
\item
This is known as the problem of finding the Coefficients of the Sylvester Denumerant
and is related to the Frobenius problem (given $n$ coins what is the largest
number that you cannot make change of). These papers tend to use advanced mathematics.
These papers give different approaches to obtain formulas for {\it any} coin set.
We will discuss these papers later; however, for now we just list some of them
\cite{agnden,alffrob,alonden,balden,beckfrob,bellden,komfrob,losden,serden}.
\end{enumerate}

\begin{definition}
If $S$ is a set of coin denominations then {\it the change function for $S$} is
the function that, on input $n$, outputs the number of ways to make change
for $n$ using the coins in $S$.
\end{definition}

In the first part of this paper we use recurrences to obtain simple derivations
for the change function
for two types of sets:
(1) $S=\{1,s,ks\}$ where $s,k\ge 2$, and 
(2) $S=\{1,s,ks,rs\}$ where $s\ge 2$, and $2\le k < r$.
As a corollary we obtain the case of pennies, nickels, dimes, and quarters.
In passing we solve the change-for-a-dollar problem by hand.

In the second part of this paper we use generating functions (in a simple way) to obtain  for {\it any} finite set $S$,
the change function. 

The formulas we derive are known; however, our proofs are simpler than those in the literature.

\section{General Definitions and Theorems}

\begin{convention}
Let $S$ be a non empty set of coins.
The number of ways to make 0 cents change is 1.
For all $n\le -1$ the number of ways to make $n$ cents change is 0.
\end{convention}

\begin{definition}\label{de:def}
Let $S=\{1<s<t<u\}$.
\begin{enumerate}
\item
$a_n$ is the number of ways to make change of $n$ cents using pennies.
Clearly $(\forall n)[a_n=1]$.
\item
$b_n$ is the number of ways to make change of $n$ cents using the first two coins (pennies and $s$-cent coins).
Clearly $(\forall n)[b_n = a_n + b_{n-s}]$. We use that $(\forall n\le -1)[a_n=0]$.
\item
$c_n$ is the number of ways to make change of $n$ cents using the first three coins (pennies, $s$-cent coins, and $t$-cent coins).
Clearly $(\forall n)[c_n = b_n + c_{n-t}]$. 
\item
$d_n$ is the number of ways to make change of $n$ cents using all four coins (pennies, $s$-cent coins, $t$-cent coins, and $u$-cent coins).
Clearly $(\forall n)[d_n = c_n + d_{n-u}]$. 
\end{enumerate}
\end{definition}

We do one example: Let $S=\{1,2,4,5\}$.  What is $d_{9}$?
\begin{enumerate}
\item
If one 5-cent coin is used then for the remaining four cents you must use either
one 4-cent coin; two 2-cents coins; one 2-cent coin and two pennies; or four pennies.
\item
If no 5-cent coins and one 4-cent coin is used then for the remaining five cents you must use either 
two 2-cents coins and one penny; one 2-cent coin and three pennies; or five pennies.
\item
If no 5-cent coins and two 4-cent coins are used then for the remaining one cent you must use one penny.
\item
If no 5-cent coins and no 4-cent coins are used then you must use either zero, one, two, three, or four
2-cent coins and the appropriate number of pennies. 
\end{enumerate}
Hence $d_{9}=4+3+1+5 = 12$.

Using the recurrence for $b_n$ 
and $(\forall n)[a_n=1]$, one can show the following.

\begin{theorem}\label{th:bn}
$(\forall n)[b_n = \floor{\frac{n}{s}} + 1].$
\end{theorem}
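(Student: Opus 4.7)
The plan is to prove the formula by straightforward induction on $n$, using the recurrence $b_n = a_n + b_{n-s}$ from Definition~\ref{de:def}, together with the convention that $b_n = 0$ for $n \leq -1$ and the fact that $a_n = 1$ for all $n \geq 0$. I would split the argument into a base range $0 \leq n \leq s-1$ and an inductive step for $n \geq s$.

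For the base range, whenever $0 \leq n \leq s-1$ we have $n - s \leq -1$, so $b_{n-s} = 0$ by convention, and therefore $b_n = a_n + b_{n-s} = 1 + 0 = 1$. Since $\lfloor n/s \rfloor = 0$ in this range, we indeed have $b_n = 1 = \lfloor n/s \rfloor + 1$.

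For the inductive step, assume the formula holds for all nonnegative integers smaller than $n$, where $n \geq s$. Then $n - s \geq 0$, so the inductive hypothesis applies and gives $b_{n-s} = \lfloor (n-s)/s \rfloor + 1$. The key arithmetic fact I would cite is $\lfloor (n-s)/s \rfloor = \lfloor n/s \rfloor - 1$, which holds because subtracting $s$ from $n$ just decrements the quotient by $1$ without changing the remainder. Combining, $b_n = 1 + \lfloor n/s \rfloor - 1 + 1 = \lfloor n/s \rfloor + 1$, closing the induction.

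There is essentially no hard step here: the recurrence was already established in Definition~\ref{de:def}, and the only non-mechanical observation is the floor identity above, which is routine. If one wanted a completely proof-free sanity check, the formula also matches the obvious combinatorial interpretation: a representation is determined by the number $j$ of $s$-cent coins used, with $j$ ranging over $0, 1, \ldots, \lfloor n/s \rfloor$.
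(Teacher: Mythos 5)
Your induction is correct and is exactly the argument the paper has in mind: the paper simply remarks that the formula follows from the recurrence $b_n = a_n + b_{n-s}$ and $a_n = 1$, which is what your base range $0 \le n \le s-1$ plus the floor identity $\floor{\frac{n-s}{s}} = \floor{\frac{n}{s}} - 1$ make precise. No gaps; your combinatorial sanity check (counting the number of $s$-cent coins from $0$ to $\floor{\frac{n}{s}}$) matches the intended interpretation as well.
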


We can now solve the change-for-a-dollar problem by hand.
Let $S=\{1,5,10,25\}$.
We need to compute $d_{100}$.
We use the exact formula for $b_n$ and the recurrences
for $c_n$ and $d_n$.

$d_{100} = c_{100}+c_{75} + c_{50} + c_{25} + c_0$

$c_0=1$

$c_{25}= b_{25} + b_{15} + b_5 = 6 + 4 + 2 = 12$

$c_{50}= b_{50} + b_{40} + b_{30} + b_{20} + b_{10} + b_0 = 11 + 9 + 7 + 5 + 3 + 1 = 36$

$c_{75}= b_{75} + b_{65} + b_{55} + b_{45} + b_{35} + c_{25} = 16 + 14 + 12 + 10 + 8 + 12 = 72$

$c_{100} = b_{100} + b_{90} + b_{80} + b_{70} + b_{60} + c_{50} = 21 + 19 + 17 + 15 + 13 + 36 =  121$

Hence

$d_{100} = 1 + 12 + 36 + 72 + 121 = 242$.

\section{The Coin Set $\{1,s,ks\}$}

Throughout this section we will be using the coin set
$S=\{1,s,ks\}$ where $s,k\ge 2$ are fixed natural numbers.
The quantities $a_n,b_n,c_n$ are as in Definition~\ref{de:def}
with coin set $S$.

To determine the number of ways to make change, you can always round
down to the nearest multiple of $s$. Formally $c_{sL+L_0}=c_{sL}$.
We use this without mention.

Let $n=sL+L_0$ where $0\le L_0\le s-1$ and $L\ge 1$.
Using the recurrence for $c_n$ and the formula for $b_n$ (from Theorem~\ref{th:bn}) we have:

\[
\begin{array}{rl}
c_n= c_{sL} =  & b_{sL} + c_{s(L-k)} \cr
                       =  & b_{sL} + b_{s(L-k)}+ c_{s(L-2k)} \cr
		       =  & b_{s(L-0)} + b_{s(L-k)}+\cdots + b_{s(L-ki)}+c_{s(L-ki-k)}\cr
		       =  & (L+1) + (L-k+1) + \cdots + (L-ki+1) + c_{s(L-ki-k)}\cr
\end{array}
\]

Let $L\equiv j \pmod k$. Let $i=\frac{(L-j-k)}{k}$.
Then the last term in the sum is $c_{sj}$.
Since $j\le k-1$, $sj < sk$. Hence
$c_{sj} = b_{sj} = j+1$. 
The resulting sum is an arithmetic series with
first term $j+1$, 
last term  $j+1 + (\frac{L-j}{k})k$,
and  number of terms $\frac{L-j}{k}+1 = \frac{L-j+k}{k}$.
Hence after easy algebra we have the following

\begin{theorem}\label{th:cn}
Let $n=sL+L_0$ where $0\le L_0\le s-1$ and $L\ge 1$.
(So that $L=\floor{\frac{n}{s}}$.)
\begin{enumerate}
\item
Let $j$ be such that $L\equiv j \mod k$. Then 
$$c_n=\frac{L^2+(k+2)L + 2k}{2k} + \frac{(k-2)j-j^2}{2k}.$$
\item
$\frac{n^2}{2ks^2} + \frac{n}{2s}-k \le c_n \le  \frac{n^2}{2ks^2} + \frac{(k+2)n}{2ks} + \frac{(k-2)^2}{8}+1$
\end{enumerate}
\end{theorem}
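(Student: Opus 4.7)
The plan is to pick up where the excerpt leaves off: the recursion has been unrolled until it terminates at the base case $c_{sj}=b_{sj}=j+1$, leaving an arithmetic progression, so Part~1 is a direct application of the sum-of-AP formula and Part~2 follows by substituting $L=\floor{n/s}$ and bounding the residual $j$-term.

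For Part~1, the summation displayed just before the theorem is an arithmetic progression with first term $L+1$, last term $j+1$, common difference $-k$, and $(L-j)/k+1$ terms. Applying (first+last)/2 times (number of terms) gives $c_n=\frac{(L+j+2)(L-j+k)}{2k}$. I would then expand the product to $L^2+(k+2)L+2k+(k-2)j-j^2$ and split into the $L$-only and $j$-only pieces to recover the claimed decomposition.

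For Part~2, I would substitute $L=\floor{n/s}$, using $n/s-1\le L\le n/s$ (valid since $L\ge 1$). For the upper bound, use $L\le n/s$ termwise and maximize the $j$-quadratic: $(k-2)j-j^2$ is a concave parabola with peak value $(k-2)^2/4$ at $j=(k-2)/2$, so its scaled contribution is at most $(k-2)^2/(8k)\le (k-2)^2/8$, which gives the stated upper bound.

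For the lower bound, use $L\ge n/s-1$; a short calculation shows the linear-in-$n$ terms combine cleanly, since $-1/(ks)+(k+2)/(2ks)=1/(2s)$, and the $j$-quadratic is at worst $-(k-1)/(2k)>-1/2$, attained at $j=k-1$. I anticipate no genuine obstacle; the only real work is the bookkeeping in this last step, where the slack from the floor and the slack from the $j$-term must both be tracked and then generously over-estimated to land at the deliberately weak additive $-k$ in the stated bound.
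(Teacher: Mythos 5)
Your proposal is correct and follows essentially the same route as the paper: Part~1 is obtained exactly as in the text by summing the arithmetic progression from $j+1$ up to $L+1$ in steps of $k$ (giving $\frac{(L+j+2)(L-j+k)}{2k}$ and expanding), and Part~2 by substituting $\frac{n}{s}-1\le L\le \frac{n}{s}$ and bounding the $j$-quadratic. If anything, your lower-bound bookkeeping is more careful than the paper's, which simply (and over-generously) asserts the residual term is at least $-k$, whereas you correctly identify its true minimum $-\frac{k-1}{2k}$ at $j=k-1$ and then relax to the stated $-k$.
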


\begin{proof}
Part 1 follows from our work.
We prove Part 2

For the lower bound we use that $L\ge \frac{n-s}{s}$ and note that the last term has min value, as $0\le j\le k-1$,
of $-k$.
For the upper bound we use that $L\le \frac{n}{s}$ and note that the last term is max value, as $0\le j\le k-1$, of
$\frac{(k-2)^2}{8}+1$.
\end{proof}

\begin{note}
Theorem~\ref{th:cn} for the special case of pennies, nickels, and dimes
($s=5$, $k=2$) was proven by Deborah Levine's article~\cite{makingchange}.
\end{note}

\begin{note}
One can derive $c_n=\frac{n^2}{2ks^2} + \Theta(\frac{n}{2k})$
from Schur's theorem~\cite{schurpost,schurgen,wilfgen}.
\end{note}

\section{The Coin Set $\{1,s,ks,rs\}$}

Throughout this section we will be using the coin set
$S=\{1,s,ks,rs\}$ where $s,k,r$ are fixed natural numbers with
$s,k,r\ge 2$ and $r>k$.
The quantities $a_n,b_n,c_n,d_n$ are as in Definition~\ref{de:def}
with coin set $S$.

To determine the number of ways to make change, you can always round
down to the nearest multiple of $s$. Formally $d_{sL+L_0}=d_{sL}$.
We use this without mention.

Let $n=s(rL+M)+L_0$ where $0\le M\le r-1$, $0\le L_0\le s-1$.
Using the recurrence for $d_n$ we have:

\[
\begin{array}{rl}
d_n= d_{s(rL+M)}   =  & c_{s(rL+M)} + d_{s(rL+M-r\times 1)}\cr
                   =  & c_{s(rL+M)} + c_{s(rL+M-r\times 1)} + d_{s(rL+M-r\times 2)}\cr
		   =  & c_{s(rL+M)} + c_{s(rL+M-r\times 1)} + c_{s(rL+M-r\times 2)} + \cdots + c_{s(M+r)}+ d_{sM}\cr
		   =  & c_{s(rL+M)} + c_{s(rL+M-r\times 1)} + c_{s(rL+M-r\times 2)} + \cdots + c_{s(M+r)}+ c_{sM}\cr
		   =  & \sum_{i=0}^{L} c_{s(ri+M)}\cr
\end{array}
\]

Using the formula for $c_n$ from Theorem~\ref{th:cn} we obtain

$$
d_n=\sum_{i=0}^L \frac{(M+ri)^2+(k+2)(M+ri)+2k}{2k}+\sum_{i=0}^L \frac{(k-2)(ri+M \bmod k)-(ri+M\bmod k)^2}{2k}.
$$

We will evaluate the second sum later. For now we name it:

\begin{notation}
$\Delta(L,M)= \sum_{i=0}^L \frac{(k-2)(ri+M \bmod k)-(ri+M\bmod k)^2}{2k}$
\end{notation}

Thus $d_n$ is

$$
\sum_{i=0}^L \frac{(M+ri)^2+(k+2)(M+ri)+2k}{2k}+\Delta(L,M).
$$

$$
=\frac{1}{2k}
\biggl ((L+1)(M^2+kM+2M+2k) + r(2M+k+2)\sum_{k=0}^L i + r^2\sum_{i=0}^L i^2\biggr )  + \Delta(L,M)
$$

$$
=\frac{1}{12k}
\biggl ((L+1)(
2r^2L^2+
(r^2+6Mr+3kr+6r)L+
6M^2+
(6k+12)M+
12k
)\biggr )  + \Delta(L,M)
$$

\begin{lemma}
Let $L,M\ge 1$ and $a\ge 0$. 
\begin{enumerate}
\item
$\sum_{i=0}^L (ri+M \bmod k)^a= \sum_{j=0}^{k-1} (rj+M)^a \floor{\frac{L-j+k}{k}}.$
\item
$\Delta(L,M) = \frac{1}{2k}(\sum_{j=0}^{k-1} (\floor{\frac{L-j}{k}}+1)((k-2)(rj+M \bmod k) - (rj+M \bmod k)^2)$
\item
If $k=2$ and $r\equiv 0 \pmod 2$ then $\Delta(L,M)=-\frac{(1+(-1)^{M+1})(L+1)}{8}$.
\item
If $k=2$ and $r\equiv 1 \pmod 2$ then $\Delta(L,M)= -\frac{2L+ (1+(-1)^L)(1+(-1)^{M+1})+(1+(-1)^{L+1})}{16}$.
\end{enumerate}
\end{lemma}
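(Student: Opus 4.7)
The plan is to reduce everything to Part 1, which is a straightforward re-indexing. The key observation is that $(ri+M) \bmod k$ depends only on $i \bmod k$: writing $i = km+j$ with $0 \le j \le k-1$ gives $ri+M = rkm + (rj+M)$, hence $(ri+M) \bmod k = (rj+M) \bmod k$. Partitioning $\{0,1,\ldots,L\}$ into residue classes modulo $k$, the class $j$ contains $\floor{(L-j)/k}+1 = \floor{(L-j+k)/k}$ elements (the edge case $j>L$ correctly gives $0$). Grouping the sum by class yields Part 1.

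For Part 2, I would apply Part 1 with $a=1$ and $a=2$ to the two halves of the defining sum for $\Delta(L,M)$, factoring out the common count $\floor{(L-j+k)/k} = \floor{(L-j)/k}+1$.

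For Parts 3 and 4, specialize $k=2$. Two simplifications kick in: the factor $k-2=0$ kills the linear piece, and $x^2 = x$ for $x \in \{0,1\}$. So Part 2 collapses to
$$\Delta(L,M) = -\tfrac{1}{4}\Bigl[(\floor{L/2}+1)(M \bmod 2) + (\floor{(L-1)/2}+1)((r+M) \bmod 2)\Bigr].$$
If $r$ is even, $(r+M) \bmod 2 = M \bmod 2$ and $(\floor{L/2}+1) + (\floor{(L-1)/2}+1) = L+1$ regardless of the parity of $L$; rewriting $M \bmod 2$ as $(1+(-1)^{M+1})/2$ gives Part 3. If $r$ is odd, $(r+M) \bmod 2 = 1-(M \bmod 2)$, producing a piecewise answer depending on the parities of both $L$ and $M$; checking the four cases and repackaging them via the indicator identities $1+(-1)^L = 2[L \text{ even}]$, $1+(-1)^{L+1} = 2[L \text{ odd}]$, $1+(-1)^{M+1} = 2[M \text{ odd}]$ reproduces the closed form in Part 4.

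The main obstacle is purely bookkeeping: verifying that the four-case piecewise formula in the $r$-odd subcase collapses into the single closed form asserted in Part 4. Conceptually the whole lemma is just the residue-class re-indexing of Part 1 followed by specialization to $k=2$; no new idea is needed beyond careful case analysis.
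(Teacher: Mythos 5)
Your proposal is correct and follows essentially the same route as the paper: Part 1 by partitioning $\{0,\ldots,L\}$ into residue classes mod $k$ (using that $(ri+M)\bmod k$ depends only on $i\bmod k$, with count $\floor{\frac{L-j+k}{k}}$ per class), Part 2 by specializing $a=1,2$, and Parts 3--4 by the $k=2$ simplifications $k-2=0$ and $x^2=x$ on $\{0,1\}$ followed by a parity case analysis that matches the paper's four-row table. Your intermediate expression even states the second term correctly as $(\floor{(L-1)/2}+1)((r+M)\bmod 2)$, which is cleaner than the paper's display (where the $r$-dependence is absorbed into the case split), but the substance is identical.
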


\begin{proof}

\noindent
1) We break this sum into parts depending on what $i$ is congruent to mod $k$.

\[
\begin{array}{rl}
\sum_{i=0}^L (ri+M \bmod k)^a=& \sum_{j=0}^{k-1} \sum_{i=0,i\equiv j \bmod k}^L (ri+M \bmod k)^a\cr
                           =& \sum_{j=0}^{k-1} \sum_{i=0,i\equiv j \bmod k}^L (rj+M \bmod k)^a\cr
                           =& \sum_{j=0}^{k-1} (rj+M \bmod k)^a\sum_{i=0,i\equiv j \bmod k}^L 1\cr
                           =& \sum_{j=0}^{k-1} (rj+M \bmod k)^a \floor{\frac{L-j+k}{k}} \cr
\end{array}
\]

\bigskip

\noindent
2) This follows from part 1 using $a=1$ and $a=2$.

\bigskip

\noindent
3 and 4) If $k=2$ then notice that the expression for $\Delta(L,M)$ is simplified considerably
since $k-2=0$ and $(rj+M\bmod 2)^2 = (rj+M\bmod 2)$. Also note that the summation only has two terms
($j=0$ and $j=1$). Hence we obtain

$$\Delta(L,M) = -\frac{1}{4}
\biggl ( \floor{\frac{L+2}{2}} (M \bmod 2) + \floor{\frac{L+1}{2}}(M\bmod 2)\biggr ).$$

\noindent
{\bf Case 0: $r\equiv 0 \pmod 2$}. 

If $M\equiv 0 \pmod 2$ then $\Delta(L,M)=0$.

If $M\equiv 1 \pmod 2$ then 

$$\Delta(L,M) = -\frac{1}{4}
\biggl ( \floor{\frac{L+2}{2}} + \floor{\frac{L+1}{2}}\biggr )=-\frac{L+1}{4}
$$

One can check that $\Delta(L,M)=-\frac{(1+(-1)^{M+1})(L+1)}{8}$.

\noindent
{\bf Case 1: $r\equiv 1 \pmod 2$} Then

$$\Delta(L,M) = -\frac{1}{4}
\biggl ( \floor{\frac{L+2}{2}} (M \bmod 2) + \floor{\frac{L+1}{2}}(1+M\bmod 2)\biggr ).$$

The following table summarizes what $\Delta(L,M)$ is, given what $L,M$ are mod 2.

\[
\begin{array}{|c|c|c|}
\hline
L \bmod 2 &M \bmod 2 &  \Delta(L,M)\cr
\hline
 0 & 0 & -\frac{L}{8}\cr
 0 & 1 & -\frac{L+2}{8} \cr
 1 & 0 & -\frac{L+1}{8} \cr
 1 & 1 & -\frac{L+1}{8} \cr
\hline
\end{array}
\]

One can check that $\Delta(L,M)=\frac{2L+ (1+(-1)^L)(1+(-1)^{M+1})+(1+(-1)^{L+1})}{16}$.
\end{proof}

Putting this all together we have the following.

\begin{theorem}\label{th:dn}
Let $n=s(rL+M)+L_0$ where $0\le M\le r-1$, $0\le L_0 \le s-1$, and $L\ge 1$.
(So $L=\floor{\frac{n}{rs}}$, $M=\floor{\frac{n\bmod {rs}}{s}}$,
and $n\equiv L_0 \pmod s$.)
Then the following are true.
\begin{enumerate}
\item
$d_n$ is
$$
\frac{1}{12k}
\biggl ((L+1)(
2r^2L^2+
(r^2+6Mr+3kr+6r)L+
6M^2+
(6k+12)M+
12k
)\biggr )
$$

$$+\frac{1}{2k}\biggl (\sum_{j=0}^{k-1} \biggl (\floor{\frac{L-j}{k}}+1\biggr )((k-2)(rj+M \bmod k) - (rj+M \bmod k)^2)\biggr )$$

\item
$d_n= \frac{n^3}{6krs^3} + \Theta(n^2)$.

\item
If $k=2$ and $r\equiv 0 \pmod 2$ then $d_n$ is
$$
\frac{1}{24}
\biggl ((L+1)(
2r^2L^2+
(r^2+6Mr+12r)L+
6M^2+
24M+
24
)\biggr )
-\frac{(1+(-1)^{M+1})(L+1)}{8}
$$

\item
If $k=2$ and $r\equiv 1 \pmod 2$ then $d_n$ is
$$
\frac{1}{24}
\biggl ((L+1)(
2r^2L^2+
(r^2+6Mr+12r)L+
6M^2+
24M+
24
)\biggr )
$$

$$
+\frac{2L+ (1+(-1)^L)(1+(-1)^{M+1})+(1+(-1)^{L+1})}{16}.
$$
\end{enumerate}
\end{theorem}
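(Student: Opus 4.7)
The plan is to assemble Theorem~\ref{th:dn} from the derivation already carried out in the text together with the Lemma. The text has established, via the recurrence $d_n = c_n + d_{n-rs}$ and the formula for $c_n$ from Theorem~\ref{th:cn}, that
\[
d_n = \frac{1}{2k}\biggl((L+1)(M^2+kM+2M+2k) + r(2M+k+2)\sum_{i=0}^L i + r^2\sum_{i=0}^L i^2\biggr) + \Delta(L,M).
\]
Part 1 follows by substituting closed forms for the two sums and collecting; Part 2 is an asymptotic read-off of Part 1; and Parts 3 and 4 specialize Part 1 using the Lemma's closed forms for $\Delta(L,M)$.

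For Part 1, I substitute $\sum_{i=0}^L i = L(L+1)/2$ and $\sum_{i=0}^L i^2 = L(L+1)(2L+1)/6$. Each of the three summands in the large parenthesis then carries a common factor $(L+1)$. Pulling it out, multiplying the inside by $6$, and compensating the prefactor $1/(2k)$ to $1/(12k)$ clears the remaining fractions, leaving
\[
6(M^2+kM+2M+2k) + 3rL(2M+k+2) + r^2 L(2L+1)
\]
inside. Regrouping by powers of $L$ yields the quadratic $2r^2 L^2 + (r^2+6Mr+3kr+6r)L + 6M^2 + (6k+12)M + 12k$ appearing in the theorem, and the second display of Part 1 is part 2 of the Lemma verbatim.

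For Part 2, observe that $L = \floor{n/(rs)} = n/(rs) + O(1)$. The dominant contribution in Part 1 is $(L+1)\cdot 2r^2 L^2/(12k) = r^2 L^3/(6k) + O(L^2)$, which becomes $n^3/(6krs^3) + O(n^2)$ after substitution. Every remaining polynomial term in Part 1 is $O(L^2) = O(n^2)$, and $\Delta(L,M) = O(L) = O(n)$ since each of its $k$ summands has $\floor{(L-j)/k}+1 = O(L)$ multiplied by a quantity bounded in terms of $k$, $r$, and $M\bmod k$. Combining these bounds gives $d_n = n^3/(6krs^3) + \Theta(n^2)$.

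For Parts 3 and 4, I set $k=2$ in the formula of Part 1. This collapses $3kr$ to $6r$, $6k+12$ to $24$, and $12k$ to $24$, producing the main displayed expression common to both cases. Part 3 then follows by substituting Lemma part 3 for $\Delta(L,M)$, and Part 4 follows by substituting Lemma part 4. The main obstacle in the whole argument is simply careful bookkeeping in the polynomial expansion for Part 1; once that is carried out, the asymptotics and the two $k=2$ specializations are essentially transcription.
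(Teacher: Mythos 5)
Your method is exactly the paper's: the paper proves Theorem~\ref{th:dn} precisely by ``putting this all together,'' i.e., by substituting $\sum_{i=0}^L i = L(L+1)/2$ and $\sum_{i=0}^L i^2 = L(L+1)(2L+1)/6$ into the displayed expression for $d_n$, pulling out $(L+1)$, and regrouping by powers of $L$ (this algebra appears, step by step, in a commented-out block of the source and lands on the same quadratic $2r^2L^2+(r^2+6Mr+3kr+6r)L+6M^2+(6k+12)M+12k$ you obtain), then appending the Lemma's closed forms for $\Delta(L,M)$. Your Parts 1 and 3 match the paper step for step, and your Part 2 read-off is if anything more detailed than the paper, which states the asymptotic without proof.

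However, there is one point your ``follows by substituting Lemma part 4'' passes over, and carrying out that substitution honestly would have caught it: Lemma part 4 states $\Delta(L,M) = -\frac{2L+(1+(-1)^L)(1+(-1)^{M+1})+(1+(-1)^{L+1})}{16}$, with a minus sign (consistent with the table in its proof, all of whose entries are negative), so substituting it into $d_n = \hbox{main term} + \Delta(L,M)$ yields the main term \emph{minus} that fraction --- whereas the theorem as printed has a plus. The minus sign is the correct one: for $s=5$, $k=2$, $r=5$, $n=100$ (so $L=4$, $M=0$) the main term evaluates to $\frac{5}{24}(800+340+24)=242.5$ and $\Delta(4,0)=-\frac{1}{2}$, recovering $d_{100}=242$ only with the subtraction. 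So the printed Part 4 (along with the unsigned ``one can check'' line ending the Lemma's proof and the subsequent corollary) carries a sign typo, and your proposal, as written, certifies a formula that contradicts the very Lemma you cite. A smaller remark: your Part 2 argument literally establishes only an $O(n^2)$ error; for $\Theta(n^2)$ one should check the $n^2$ coefficient is positive, which it is --- expanding with $L=\frac{n-sM-L_0}{rs}$ gives coefficient $\frac{r+k+2-2L_0/s}{4krs^2}>0$ since $L_0<s$.
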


\begin{note}
One can derive the asymptotic result (part 2) from Schur's theorem~\cite{schurpost,schurgen,wilfgen}.
\end{note}

As a corollary of Theorem~\ref{th:dn} we obtain a formula
for making change of $n$ cents
using pennies, nickels, dimes, and quarters.

\begin{corollary}
If $s=5$, $k=2$, and $r=5$ then $d_n$ is 
$$
\frac{1}{24}
\biggl ((L+1)(
50L^2+
(8530M)L+
6M^2+
24M+
24
)\biggr )
$$

$$
+\frac{2L+ (1+(-1)^L)(1+(-1)^{M+1})+(1+(-1)^{L+1})}{16}.
$$
\end{corollary}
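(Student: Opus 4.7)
The plan is to specialize Theorem~\ref{th:dn} to the parameters $s=5$, $k=2$, $r=5$. Since $k=2$ and $r=5\equiv 1\pmod 2$, we are in the setting of part~4 of the theorem, so no further case analysis or summation is needed; the work is entirely substitution.

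The only real step is simplifying the polynomial in $L$ and $M$ appearing in
$$\frac{1}{12k}\biggl((L+1)\bigl(2r^2L^2 + (r^2+6Mr+3kr+6r)L + 6M^2 + (6k+12)M + 12k\bigr)\biggr).$$
With $k=2$ and $r=5$: the leading factor $\frac{1}{12k}$ becomes $\frac{1}{24}$; the coefficient $2r^2$ of $L^2$ becomes $50$; the coefficient of $L$ simplifies as $r^2+6Mr+3kr+6r = 25+30M+30+30 = 85+30M$; and the constant part becomes $6M^2+24M+24$. The second summand, $\frac{2L+(1+(-1)^L)(1+(-1)^{M+1})+(1+(-1)^{L+1})}{16}$, is already independent of $k,r,s$ once we are in the odd-$r$, $k=2$ case, and is simply copied over.

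Assembling these pieces gives exactly the expression in the corollary. There is no obstacle: the argument is a direct plug-in, and the only thing to watch is the arithmetic in the coefficient of $L$, where one must remember to collect both the $3kr=30$ and the $6r=30$ contributions with the $r^2=25$ term.
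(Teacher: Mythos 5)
Your proposal is correct and is precisely the paper's (implicit) proof: the corollary is stated as an immediate consequence of Theorem~\ref{th:dn}, obtained by plugging $s=5$, $k=2$, $r=5$ into part~4 (the odd-$r$ case), and your simplification $r^2+6Mr+3kr+6r = 25+30M+30+30 = 85+30M$ is exactly right. Indeed, your $(85+30M)L$ corrects an evident typographical slip in the printed corollary, where the coefficient appears as $(8530M)L$ with the plus sign dropped.
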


\section{Any Finite Coin Set}

Throughout this section $S=\{t_1<t_2<\cdots < t_v\}$ 
is our coin set. We will derive the change function for $S$ using
generating functions. The use of generating functions is well known in this area.
The basic idea of this proof is from Graham, Knuth, Patashnik~\cite{concrete}.
The proof we give seems to be new. We discuss the literature after we obtain the result.


It is easy to see that the number of ways to make change of $n$ cents using the coins in $S$ is
the coefficient of $z^n$ in 

$$C(z)=\prod_{i=1}^v \frac{1}{(1-z^{t_i})}$$

Let $t$ be the least common multiple of $\{t_1,\ldots,t_v\}$.
For $1\le i\le v$ let $f_i$ be the polynomial such that 
$(1-z^{t})= (1-z^{t_i})f_i(z)$. Note for later that

$$f_i(z) = (1+z^{t_i} + z^{2t_i} + \cdots + z^{t}).$$

It is easy to see that

$$C(z)=\frac{f_1(z)\cdots f_v(z)}{(1-z^{t})^{v}}.$$

Let $A(z)=f_1(z) \cdots f_v(z)$. 
Let $M$ be the degree of $A(z)$ which is 
$((t-t_1)+(t-t_2)+\cdots+(t-t_v))\le tv$.
Let the coefficient of $z^{j}$ in $A(z)$ be $a_j$.
Using the power series expansion 
$\frac{1}{(1-x)^{v}} = \sum_{i=0}^\infinity \binom{i+v-1}{v-1} x^i$ and
plugging in $x=z^t$ we obtain

$$
C(z)=
\biggl (\sum_{j=0}^{M} a_j z^{j} \biggr )
\biggl ( \sum_{i=0}^\infinity \binom{i+v-1}{v-1} z^{ti} \biggr )
=
\sum_{j=0}^M \sum_{i=0}^\infinity a_j \binom{i+v-1}{v-1} z^{ti+j}.
$$

The coefficient of $z^n$  is 

$$\sum_{0\le j\le M \st j\equiv n \bmod t}  a_j \binom{\frac{n-j}{t}+v-1}{v-1}$$

How easy is it to find the $a_j$'s? Note that since 

$$f_i(z) = (1+z^{t_i} + z^{2t_i} + \cdots + z^{t})$$

\noindent
$a_j$ is the number of ways to make change of $j$ using coins in $S$ with the
restriction that coin $t_i$ is used at most $\frac{t}{t_i}$ times.
A simple dynamic program can find all of the $a_j$'s; however, this will
take $t^v$ steps.

How many operations does it take to, given $n$, find the answer.
Note that
the binomial coefficients are consecutive in that the top part goes through
$\le \frac{M}{t}$ consecutive numbers while the bottom part stays the same.
Hence this can be done in $\le v-1 + \frac{M}{t}\le 2v$ multiplications.
each one is multiplied by the appropriate $a_j$ which is another $\le \frac{M}{t}\le v$ multiplications.
Hence we have $\le 3v$ multiplications.
The summation then adds $\frac{M}{t}\le v$ additions
(one would need to code this up carefully and only visit those $j\equiv n \bmod t$).
This leads to $4v$ operations.
The number of operations depends only on the number of coins and not their
values; however, if the coins had large values that would make each operation take longer.

Putting this all together we have the following.
\begin{theorem}\label{th:fml}
Let $S=\{t_1,\ldots,t_v\}$ and $t$ be the least common multiple of $t_1,\ldots,t_v$.
Let $M=((t-t_1)+(t-t_2)+\cdots+(t-t_v))$.
Then the change function for $S$ is
$$\sum_{0\le j\le M \st j\equiv n \bmod t}  a_j \binom{\frac{n-j}{t}+v-1}{v-1}$$
where the coefficients $a_j$ can be found in $O(t^v)$ steps and
the formula itself can be evaluated in $O(v)$ steps.
\end{theorem}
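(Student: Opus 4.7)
The plan is to express the change function as the coefficient of $z^n$ in a rational generating function, then clear denominators so that the only denominator is $(1-z^t)^v$, and finally read off the coefficient by a direct Cauchy-product calculation. First I would verify that the generating function for the change function is
$$C(z) = \prod_{i=1}^v \frac{1}{1-z^{t_i}},$$
using the geometric expansion $\frac{1}{1-z^{t_i}} = \sum_{k\ge 0} z^{k t_i}$: multiplying the $v$ series together and collecting $z^n$ counts exactly the tuples $(k_1,\ldots,k_v) \in \nat^v$ with $\sum k_i t_i = n$. Next, since $t = \mathrm{lcm}(t_1,\ldots,t_v)$, each factor $1-z^{t_i}$ divides $1-z^t$, with quotient $f_i(z) = 1 + z^{t_i} + z^{2 t_i} + \cdots + z^{t - t_i}$. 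Multiplying numerator and denominator of $C(z)$ by $\prod_i f_i(z)$ gives $C(z) = A(z)/(1-z^t)^v$, where $A(z) = \prod_i f_i(z)$ has degree $M = \sum_i (t - t_i)$.

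Writing $A(z) = \sum_{j=0}^M a_j z^j$ and applying the standard identity $\frac{1}{(1-x)^v} = \sum_{i\ge 0} \binom{i+v-1}{v-1} x^i$ at $x = z^t$, I would multiply the two series and extract the coefficient of $z^n$. Only pairs $(i,j)$ with $t i + j = n$ contribute; these are exactly the $j \in \{0,\ldots,M\}$ with $j \equiv n \pmod t$, paired with $i = (n-j)/t$, which yields the claimed closed form.

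For the complexity claims I would argue as follows. A coefficient $a_j$ equals the number of tuples $(c_1,\ldots,c_v)$ with $0 \le c_i \le t/t_i$ and $\sum c_i t_i = j$, so the entire list $a_0,\ldots,a_M$ can be computed by enumerating these tuples and tallying by weight; the total count is $\prod_i (t/t_i + 1) = O(t^v)$. For evaluation on a given $n$, the number of summands is at most $\lfloor M/t \rfloor + 1 \le v$ (since $M < t v$), and the binomial coefficients appearing in successive terms share the bottom argument $v-1$ while the top argument $(n-j)/t + v - 1$ decreases by $1$ as $j$ increases by $t$; each successive coefficient is therefore obtained from the previous by a single multiplication and division, so the whole sum is computable in $O(v)$ arithmetic operations.

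The main obstacle is bookkeeping rather than mathematics: one must verify the degree bound $M = \sum(t-t_i)$, check that $M/t \le v - 1$ so that the sum really has at most $v$ terms, and confirm that the incremental binomial update is legitimate (the division is always exact on the running factorial ratio). Once the factorization $C(z) = A(z)/(1-z^t)^v$ is in hand, the rest of the argument is a disciplined application of generating-function manipulations.
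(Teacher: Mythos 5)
Your proposal is correct and follows essentially the same route as the paper: the factorization $C(z)=A(z)/(1-z^t)^v$, the expansion $\frac{1}{(1-x)^v}=\sum_{i\ge 0}\binom{i+v-1}{v-1}x^i$ at $x=z^t$, coefficient extraction along $j\equiv n \pmod t$, and the same two complexity observations (enumerate bounded tuples for the $a_j$'s in $O(t^v)$; at most $v$ summands with incremental binomial updates for $O(v)$ evaluation). The only divergence is cosmetic: you correctly write $f_i(z)=1+z^{t_i}+\cdots+z^{t-t_i}$, whereas the paper's displayed $f_i$ ends at $z^{t}$ (a typo, since its own degree count $M=\sum_i(t-t_i)$ matches your version).
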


\begin{definition}
A function $f(n)$ is {\it quasi polynomial} if there exists a polynomial $g$, a function $h$,
and a number $B$ such that $f(n)=g(n)+h(n)$ and $h(n)$ depends only on $n\bmod B$.
The polynomial $g$ is called {\it the polynomial part}.
\end{definition}

We obtain the following (known) corollary of Theorem~\ref{th:fml}.
\begin{corollary}\label{co:lcm}
For any set $S$ the change function of $S$ is quasi polynomial with $B$ being
the least common multiple of the elements of $S$.
\end{corollary}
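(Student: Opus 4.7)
The plan is to deduce Corollary~\ref{co:lcm} directly from the explicit formula in Theorem~\ref{th:fml}. Let $t := \mathrm{lcm}(t_1, \ldots, t_v)$.

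First I would fix a residue $r \in \{0, 1, \ldots, t-1\}$ and restrict attention to $n$ with $n \equiv r \bmod t$. On this restriction, the condition $j \equiv n \bmod t$ in the sum from Theorem~\ref{th:fml} becomes simply $j \equiv r \bmod t$, so the index set $J_r := \{j : 0 \le j \le M,\ j \equiv r \bmod t\}$ is fixed independent of $n$. For each $j \in J_r$, $(n - j)/t$ is an integer-valued linear function of $n$, and $\binom{(n-j)/t + v - 1}{v - 1}$, being a product of $v-1$ consecutive integers divided by $(v-1)!$, is a polynomial in $n$ of degree $v - 1$. Summing over $J_r$ with the fixed weights $a_j$ yields a polynomial $p_r(n)$ of degree at most $v - 1$ satisfying $f(n) = p_r(n)$ for all $n \equiv r \bmod t$.

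To cast the result in the form $f(n) = g(n) + h(n)$ demanded by the definition, I would let $g$ capture the polynomial structure common to all the $p_r$'s and let $h$ absorb the residue-dependent remainder. Expanding each binomial coefficient in powers of $n$, the coefficient of $n^k$ in $p_r(n)$ equals $\sum_{j \in J_r} a_j Q_k(j)$ for a fixed polynomial $Q_k$ of degree $\le v - 1 - k$ that is independent of $r$. Setting $g$ to be the polynomial whose coefficient of $n^k$ (for $k \ge 1$) is the common value of this sum across $r$, the remainder $h(n) := f(n) - g(n)$ depends only on $n \bmod t$.

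The main technical hurdle is showing that the sums $\sum_{j \in J_r} a_j Q_k(j)$ really are $r$-independent for $k \ge 1$. I would handle this via the partial-fraction decomposition of $C(z) = A(z)/(1 - z^t)^v$ over the $t$-th roots of unity: the pole of order exactly $v$ at $z = 1$ produces the polynomial $g$ of degree $v - 1$, whereas each other pole at a $t$-th root of unity $\omega \ne 1$ has strictly lower order (in the typical change-making setting where $\gcd(t_1, \ldots, t_v) = 1$) and contributes a term of the form $\omega^{-n}$ times a polynomial in $n$ of strictly smaller degree. Summed over all such $\omega$, these oscillatory contributions collapse into a function depending only on $n \bmod t$, which is precisely $h$.
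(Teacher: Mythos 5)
Your first paragraph is, in essence, the paper's entire proof: Theorem~\ref{th:fml} exhibits the change function, restricted to a residue class $n\equiv r \pmod t$, as the fixed polynomial $p_r(n)=\sum_{j\in J_r} a_j\binom{(n-j)/t+v-1}{v-1}$ of degree at most $v-1$ in $n$, and the paper deduces the corollary from this with no further argument. Read with the standard meaning of quasi polynomial (a polynomial on each residue class mod $B$, equivalently a polynomial whose coefficients depend only on $n\bmod B$), you are done at that point, and your route is the same as the paper's.

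The gap is in your second and third paragraphs, where you commit to producing a single polynomial $g$ such that every $n^k$-coefficient of $p_r$ with $k\ge 1$ is independent of $r$. That claim is false in general, and the partial-fraction argument does not rescue it. The order of the pole of $C(z)$ at a $t$-th root of unity $\omega\ne 1$ equals the number of indices $i$ with $\omega^{t_i}=1$; under $\gcd(t_1,\ldots,t_v)=1$ this is at most $v-1$, but it can still be $2$ or more, and a pole of order $m\ge 2$ contributes $\omega^{-n}q(n)$ with $q$ a nonconstant polynomial of degree $m-1$. Such a term is a polynomial with periodic coefficients, not ``a function depending only on $n\bmod t$'': any $h(n)$ depending only on $n\bmod t$ takes at most $t$ values and is bounded, while $\omega^{-n}q(n)$ is unbounded. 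Concretely, take $S=\{1,2,4\}$, so $t=4$, $v=3$, and $C(z)$ has a pole of order $2$ at $\omega=-1$. On $n\equiv 0\pmod 4$ the change function equals $(n^2+8n+16)/16$ (values $1,4,9,16$ at $n=0,4,8,12$), while on $n\equiv 1\pmod 4$ it equals $(n^2+6n+9)/16$ (values $1,4,9,16$ at $n=1,5,9,13$); these differ by $(2n+7)/16$, so the linear coefficients disagree across residue classes, your sums $\sum_{j\in J_r}a_jQ_1(j)$ are not $r$-independent, and no decomposition $f=g+h$ with $g$ a polynomial and $h$ periodic exists for this $S$. Only the leading coefficient ($k=v-1$) is residue-independent when the gcd is $1$. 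So keep your first paragraph, which proves the corollary exactly as the paper does, and drop the attempted strengthening; your own hurdle is unclearable, and indeed it shows that the paper's literal $g+h$ phrasing of the definition is stronger than what Theorem~\ref{th:fml} (or any argument) can deliver, so the coefficients-periodic reading is the one the corollary supports.
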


Corollary~\ref{co:lcm} was know in the days of Sylvester and Cayley 
(see the references in~\cite{bellden}).
E.T. Bell~\cite{bellden} 
provided a different proof which is fairly simple but uses roots of unity. 
We believe our proof of Corollary~\ref{co:lcm} is simpler. 
Komatsu~\cite{komfrob} gives a method to determine,
given coin set $S$ and the number $n$
how many ways are there to make change. He claims his method is computationally
practical.  I presume it is faster than our method; however, he is not explicit about how
long it takes. The calculations involve roots of unity and partial derivatives.
We believe our approach is simpler. 
Losonek~\cite{losden} claims to have an exact formula for the case of $v=3$;
however, since the paper is not online, whatever he has will be lost to future generations.
Beck, Gessel, and Komatsu~\cite{beckfrob} derive a general formula for the polynomial
part of the change function. Their polynomial depends on Bernoulli numbers.
They do not obtain a general formula; hence our result and theirs are incomparable.
Baldoni, Berline, De Loera, Dutra, and Vergne~\cite{balden} have a polynomial time
algorithm for the following: for fixed $k$, compute the first highest $k+1$ coefficients
of the change function (they define this carefully). Their algorithm uses rather sophisticated mathematics.
Our approach is simpler but our algorithm to obtain the change function is slower.
Tripathi~\cite{tripathifrob} gives a simple proof of a general formula.
His formula depends on parameters
$m_j$ that are the least $N$ such that $N\equiv  \bmod {t_1}$ and
one can make change for $N$. 
Our proof and his are different. While we believe ours is simpler, this
is debatable.

Can one obtain a formula for the change problem quickly?
Since the unbounded knapsack problem (you are allowed to use any item any number of times)
is NP-complete, it is unlikely that a formula can be obtained and evaluated quickly.

\section{Acknowledgment}

I thank 
Tucker Bane,
Adam Busis,
and
Clyde Kruskal
for proofreading and suggestions.
I thank Michele Vergne for pointing me to the rich literature of
the change problem.
I thank Eric Weaver who wrote a program
that tested the formulas and hence gave me confidence in them.


\end{document}